\DeclareMathOperator{\id}{id}
\newcounter{theorems}
\newtheorem{thm}[theorems]{Theorem}
\newtheorem{lem}[theorems]{Lemma}
\theoremstyle{definition}
\newtheorem{defin}[theorems]{Definition}
\theoremstyle{remark}
\newtheorem{rem}[theorems]{Remark}
\newtheorem*{rem*}{Remark}
\def\blfootnote{\gdef\@thefnmark{}\@footnotetext}
\def\e {\varepsilon }
\def\phi {\varphi}
\def\be {\begin{equation}}
\def\ee {\end{equation}}
\def\bt {\begin{thm}}
\def\et {\end{thm}}
\def\R {\mathcal R}
\def\U{\mathcal U}
\def\t{{\theta}}
\def\Int{\mathrm{Int}}
\def\ef {f}
\DeclareMathOperator{\intt}{Int}
\begin{document}

\title{Attractors with non-invariant interior}
\author{Stanislav Minkov\footnote{Brook Institute of Electronic Control Machines, Moscow, Russia}, \; Alexey Okunev\footnote{Pennsylvania State University, State College, PA, USA}, \;Ivan Shilin\footnote{HSE University, Moscow, Russia}\hphantom{1}\thanks{Corresponding author; e-mail: i.s.shilin@yandex.ru}}
\date{}

\maketitle

\begin{abstract}
We construct an open set of endomorphisms of an arbitrary two-dimensional manifold which have attractors and non-wandering sets with non-invariant interior. 
This is a notable contrast to the properties of diffeomorphisms, where the interior must be invariant.
\end{abstract}

\blfootnote{\textit{Keywords.} Attractors, Milnor attractor, dynamics of endomorphisms.}

\blfootnote{\textit{Mathematics Subject Classification.} 37C70, 37C20, 37E30, 37D30.}

\blfootnote{{\bf Funding:} SM and IS were partially supported by the RFBR grant 20-01-00420-a.}

\section{Introduction}
The properties of generic endomorphisms are somewhat different from the properties of generic diffeomorphisms. It is conjectured that $C^1$-generic diffeomorphisms (of a connected manifold) whose non-wandering set has a non-empty interior are transitive,~\cite{ABD}. In contrast, for endomorphisms there are known open examples of attractors with non-empty interior (see e.g. Volk\cite{V} and Tsujii~\cite{T}).

We build on the ideas of these examples to show that the interior of the nonwandering set or attractor can be not only non-empty, but also non-invariant, and in a persistent way. That is, we construct an open set of maps that take a point in the interior of the attractor to the boundary of it. 
This is another contrast with diffeomorphisms, as for a diffeomorphism the interior of an invariant compact set is always invariant. In the known examples of attractors with non-empty interior the interior also is invariant.

Informally, an attractor is a subset of the phase space that attracts many points. There are different ways to give a formal definition. In this article we deal with the following two definitions proposed by John Milnor in~\cite{M}.

Let $X$ be a smooth compact manifold and $f: X \mapsto X$ be a $C^1$-map. We assume that $X$ is endowed with a Borel probability measure equivalent to the Lebesgue measure in each coordinate chart, and refer to it as the Lebesgue measure on~$X$. Note that the Milnor attractor we are about to define does not depend on the choice of this measure.    

\begin{defin}[Milnor attractor\footnote{J. Milnor originally called it `the likely limit set'.}] 
The Milnor attractor~$A_{Mil}(f)$ of the dynamical system $(X, f)$ is the smallest (by inclusion) closed set that contains the $\omega$-limit sets of almost all points with respect to the Lebesgue measure on $X$.
\end{defin}

\begin{defin}[Generic limit set] 
The generic limit set~$A_{gen}(f)$ of the dynamical system $(X, f)$ is the smallest closed set that contains the $\omega$-limit set of a generic point, i.e., there is a residual subset $R \subset X$ with $\omega(x) \subset A_{gen}$ for all $x \in R$.
\end{defin}

\begin{thm}[Main theorem]\label{thm:main}
For any two-dimensional surface $M$, there is a $C^1$-open set $\mathcal U$ of $C^1$-endomorphisms of $M$ with the following properties.
\begin{itemize}
    \item For every endomorphism in $\mathcal U$ the non-wandering set and the generic limit set both have non-invariant interior.
    \item There is a residual subset $\mathcal R \subset \mathcal U$ formed by maps for which the Milnor attractor has non-invariant interior. The set $\mathcal R$ includes the set  $\mathcal{U}_2$ of all $C^2$-maps in $\mathcal U$. Thus, there is a $C^2$-open set $\mathcal{U}_2$ of maps that have Milnor attractors with non-invariant interior.
\end{itemize}
\end{thm}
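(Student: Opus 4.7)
My plan is to augment a Volk or Tsujii robust attractor of nonempty interior with a one-dimensional invariant ``appendage'' arc that is part of the attractor but, being one-dimensional in a surface, contributes nothing to its interior. An interior point of the two-dimensional part is then sent onto the appendage, forcing the interior of the attractor to be non-invariant.

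\emph{Local model and the spill.} Inside a coordinate disk $D \subset M$ I would install a Volk/Tsujii-type endomorphism $f_0$ with a robust topologically transitive attractor $A_0 = \overline V \subset D$, where $V$ is open and $f_0(V) \subset V$, and a residual set of starting points in $D$ has orbit dense in $A_0$. Attached to $\overline D$ at a single boundary point $p$ I would place a smooth embedded arc $\gamma$ sticking outward, arranging that $\gamma$ is normally attracting for $f_0$ with $f_0|_\gamma$ topologically transitive (and eventually returning through $p$ into $D$). I then modify $f_0$ near a chosen point $q \in V$ by creating a small critical fold along an arc $\sigma$ through $q$ whose image sits in $\gamma$; call the resulting map $f$ and set $A = A_0 \cup \gamma$. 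Then $f(A) \subset A$, $\Int(A) = V$ (since $\gamma$ has empty interior in $M$), and $f(q) \in \gamma \setminus V \subset \partial A$, so $\Int(A)$ is not $f$-invariant.

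\emph{Attractor identification and $C^1$-openness.} To identify $A$ with $A_{gen}(f)$ and the non-wandering set, I would argue that a residual set of points has orbit dense in $A_0$; such orbits pass arbitrarily close to $\sigma$ infinitely often, hence by continuity land arbitrarily close to $\gamma$, and then normal attraction of $\gamma$ together with topological transitivity of $f|_\gamma$ force these orbits to accumulate on every point of $\gamma$. Thus $\gamma \subset A_{gen}(f)$ and $A = A_{gen}(f)$ has non-invariant interior; the analogous statement for the non-wandering set follows. All of the ingredients---the Volk/Tsujii attractor, the normally attracting arc, the critical fold, and the topological transitivity on $\gamma$---are $C^1$-open conditions, so they define a $C^1$-open set $\mathcal U$ with the required properties. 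Since the whole construction is local, it embeds in any 2D surface by placing it in a disk chart and defining the map arbitrarily (for instance as the identity) outside.

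\emph{The residual subset and the $C^2$ case.} For the Milnor attractor I need Lebesgue-a.e., not merely residually many, orbits to accumulate on $A$. For $C^2$ maps, Tsujii's theorem supplies an SRB measure on $A_0$, so Lebesgue-a.e. orbit equidistributes in $A_0$, passes near $\sigma$ infinitely often, and accumulates on $\gamma$ by the same mechanism as above; this yields the $C^2$-open set $\mathcal U_2$. For $C^1$-generic maps in $\mathcal U$, a standard Baire-category argument produces $A_{Mil}(f) = A_{gen}(f)$ and hence the residual set $\mathcal R \subset \mathcal U$. The main difficulty I foresee is the topological accumulation step: the image of the fold is a one-dimensional curve, so generic orbits never actually land on $\gamma$; to extract $\gamma \subset \omega(x)$ one must combine the normal attraction to $\gamma$ with transitivity of $f|_\gamma$ in a way that remains uniform under $C^1$-perturbations, during which $\gamma$ itself is deformed and must be tracked as a perturbed normally hyperbolic invariant arc.
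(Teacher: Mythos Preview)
Your plan has a genuine robustness gap. For non-invariance you need some $q\in\Int(A)$ with $f(q)\in A\setminus\Int(A)$; in your setup the only part of $A$ outside $\overline V$ is the one-dimensional arc $\gamma$, so you need $f(q)\in\gamma$. But ``the image of the fold lies on a prescribed curve'' is not a $C^1$-open condition: under a generic perturbation the critical-value curve $f(\sigma)$ moves, and it moves independently of the continuation $\gamma'$ of the normally hyperbolic arc, so $f(\sigma)\not\subset\gamma'$ and you lose the witness to non-invariance. There is also an internal tension in your dynamics near $\gamma$. If $\gamma$ is normally attracting and invariant with $f|_\gamma$ transitive, then orbits that come close are trapped near $\gamma$ forever and cannot be dense in $A_0$; if instead orbits ``eventually return through $p$ into $D$'', then $\gamma$ is not invariant, $f|_\gamma$ cannot be transitive, and orbits merely slide along $\gamma$ without accumulating on its interior points, so $\gamma\not\subset A_{gen}$. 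You flag the second issue yourself, but the first (robustness of $f(q)\in\gamma$) already breaks the $C^1$-openness claim.

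The paper uses a different and more stable mechanism. The attractor is kept entirely two-dimensional: one shows, via backward iteration of almost vertical curves under partial hyperbolicity, that an open stripe $S^1\times J_{out}$ lies in $A_{gen}$ (and, with distortion control, in $A_{Mil}$ for $C^2$ maps). The fold is then arranged so that the global maximum of the vertical coordinate over the whole image $f(X)$ is attained only at images of points in this open stripe. Since $A\subset f(X)$, any point of $A$ on $\partial f(X)$ is automatically on $\partial A$; hence the fold sends an interior point of $A$ to $\partial A$. The condition ``the vertical maximum of $f$ is attained only inside the stripe'' is manifestly $C^1$-open, and no one-dimensional appendage is needed.
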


\begin{rem}
\begin{enumerate}
  \item We want our phase space to be a connected manifold, and this is why we need a non-trivial construction. In Appendix~\ref{s:non-manifold} we present a simple example of a map on a union of a point and a segment with the same property.
  
  \item In our example the image of the interior of the attractor is inside the closure of this interior. This holds for any map $f$ such that non-singular points (i.e., points at which the Jacobian of $f$ is non-zero) are dense in the phase space. Indeed, for any point $x$ of the interior one can consider a sequence of non-singular points $x_n$ converging to $x$; all points $x_n$ are mapped into the interior of the attractor, so $x$ must be mapped in the closure of the interior. Note that non-singular points are dense in the phase space for a $C^r$-residual set of maps ($r=1, 2, \dots$). 

  \item Generically, the attractors of endomorphisms of one-dimensional connected manifolds either have empty interior or coincide with the whole phase space, as we show in Appendix~\ref{s:1D} using the results of~\cite{J}. Therefore, our example has the smallest possible dimension.
  
  \item We focus on the smallest dimension where a robust example of this type is possible. However, the main theorem is also true for any manifold of dimension higher than 2. We prove this in Appendix~\ref{s:3d}.

  \item Our initial approach was to first construct a skew product over a circle extension with the required properties, and then perturb it in the class of endomorphisms and use the technique from~\cite{IN} to regain the structure of a skew product and prove that the non-invariance properties are persistent. Later we came up with a simpler geometric argument. Nevertheless, the ``initial'' skew product will appear in Section~\ref{sec:skew}.
\end{enumerate}
\end{rem}

\section{Proof of the result}

\subsection{Non-invariance of interior for endomorphisms of a cylinder}\label{sec:proof}
In this section we consider maps $f \in C^1(X)$, where $X$ is a cylinder $S^1 \times I$. The circle $S^1 = \mathbb{R}/\mathbb{Z}$ is viewed as horizontal and the segment $I$ as vertical. In Section~\ref{ss:emb} we will embed this example into an arbitrary surface.

We want our maps to have the following properties. 
\begin{enumerate}
    \item \label{cond:1} The map $f$ takes the cylinder $X$ into its interior and is homotopic to a skew product over a linear circle expanding map of large degree~$d$.
    It is partially hyperbolic with a forward-invariant horizontal cone field in which the tangent vectors are strongly expanded.
\end{enumerate}

\noindent We need to introduce several objects before we can state the rest of the properties.
Let $S^1_d = \mathbb R / d \mathbb Z$. Consider the $d$-fold covering $\pi$ from $\tilde{X} = S^1_d \times I$ to $X = S^1 \times I$ given by $(\theta,\; y) \mapsto {(\theta\,\mathrm{mod}\, 1,\; y)}$. As $f$ is homotopic to a skew product, by the covering homotopy theorem the map $f$ has a lift $F: X \mapsto \tilde{X}$, i.e., a map such that $\pi\circ F = f$. The lift $F$ is unique up to a shift by an integer. It will be more convenient to define the properties of $f$ in terms of its lift. Note that the lift is a map between two different spaces (the ``short cylinder'' and the ``long cylinder'').

Given a (not necessarily backward invariant) vertical cone field on $X$, we will call a curve \emph{almost vertical}, if it is tangent to this cone field. We can lift such a cone field to $\tilde{X}$ via $\pi$ and define almost vertical curves on~$\tilde{X}$. The lengths of the horizontal projections of almost vertical curves are bounded by some constant $\delta$ that depends on the apertures of the cones. We will consider only cone fields with constant small apertures and assume that $\delta < 1/2$.
Given a cylinder $C \subset X$, we will say that a curve \emph{cuts} this cylinder if it intersects the top and bottom boundaries of~$C$.
Finally, we consider closed segments $J_{in}, J_{out}, J_1, J_2 \subset I$ such that 
\begin{equation}\label{eq:inclusions}
    J_{in} \;\subset\; J_1 \cap J_2 \;\subset\; J_{out} \;\subset\; J_1 \cup J_2.
\end{equation}

We further assume that there is a vertical cone field with $\delta < 1/2$ such that the following holds.

\begin{enumerate}
    \setcounter{enumi}{1}
    \item\label{cond:2} For $j = 1, 2$, there is a closed arc $A_j \subset S^1$ and an integer $k_j$ such that 
    \[
        [k_j-\delta,\; k_j+1+\delta] \times J_j \;\subset\; 
        \mathrm{Int}\,(F(A_j \times J_{out}))
    \]
    and $F$ restricted to $A_j \times J_{out}$ is an orientation-preserving diffeomorphism onto the image with the inverse that preserves vertical cones and uniformly expands almost vertical curves.

    \item\label{cond:3} There is an integer $k_3$ such that
    \[
        F(S^1 \times I) \cap ([k_3-\delta,\; k_3+1+\delta] \times I) \;\subset\; [k_3-\delta,\; k_3+1+\delta] \times  \mathrm{Int}\,(J_{in})  
    \]
    and $F$ restricted to $F^{-1}([k_3-\delta,\; k_3+1+\delta] \times I)$ is an orientation-preserving diffeomorphism on its image such that its inverse preserves vertical cones and uniformly expands almost vertical curves.

    \item\label{cond:4} There is an open rectangle $\Pi = (B,D) \times \intt(J_{out}) \subset X$ such that the maximum of the vertical coordinate $y$ over $f(x),\; x \in X,$ is attained at some point $p \in \Pi$ while for any point $z$ outside $\Pi$ the $y$-coordinate of $f(z)$ is strictly less than this maximum.\footnote{Note that this condition is incompatible with $f$ being a local diffeomorphism. This is why we require the vertical cones to be backward invariant only for restrictions of $f$ to certain domains where it is a local diffeomorphism.}
\end{enumerate}

These conditions are $C^1$-open and are satisfied by a skew product that we describe in Section~\ref{sec:skew}. We denote by $\U_X \subset C^1(X)$ the open set of endomorphisms that satisfy these conditions. In order to show that this set is non-empty, we will construct in Section~\ref{sec:skew} a skew product map that satisfies the four conditions. Now we focus on proving for $U_X$ the claims analogous to those of Theorem~\ref{thm:main}.

\subsubsection*{Sketch of the proof}
Consider the stripe $S^1 \times J_{out}$. Conditions~\ref{cond:2}-\ref{cond:3} will be used in Lemma~\ref{lem:backward} to iterate an arbitrary small almost vertical curve in this stripe backwards using appropriate inverse branches of $f$ until we find in the preimage an almost vertical curve that cuts the whole cylinder~$X$. Any horizontal segment, when being iterated forward, will intersect this new curve, which yields that the stripe $S^1 \times J_{out}$ is contained in the generic limit set, and hence in the non-wandering set (Lemma~\ref{lem:stripe}). If the map is $C^2$-smooth, we can also use the distortion control argument and show (Lemma~\ref{lem:Milnor}) that this stripe is contained in the Milnor attractor. The same holds for generic $C^1$-maps (Lemma~\ref{lem:generic}). Informally speaking, condition~\ref{cond:4} says that $f$ folds a piece of the stripe $S^1 \times J_{out}$ in such a way that the image of the fold is at the boundary of $f(X)$. Hence, there are points in the interior of the attractor which are taken to the boundary of the whole image of $X$.

\subsubsection*{The chain of lemmas}
\begin{lem}\label{lem:backward}
Take a map $f \in \U_X$ and an almost vertical curve $L \subset S^1 \times J_{out}$. Then there is an almost vertical curve $S$ cutting $X$ and $n > 0$ such that $f^n(S) \subset L$.
\end{lem}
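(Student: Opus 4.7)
The plan is to pull $L$ back using a combination of the inverse branches provided by conditions~\ref{cond:2} and~\ref{cond:3}. The inverse branches from condition~\ref{cond:2} have their images inside $S^1\times J_{out}$ and uniformly expand almost vertical curves by some factor $\lambda>1$; I would use them to iteratively grow $L$ until its vertical projection covers the arc $J_{in}$. A single further application of the inverse branch from condition~\ref{cond:3}, whose domain $U_3=F^{-1}([k_3-\delta,\,k_3+1+\delta]\times I)$ spans the full vertical extent of $X$, then produces the desired almost vertical curve $S$ that cuts $X$ from top to bottom.

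For the iteration, set $L_0:=L$ and, at step $k$, examine the $y$-projection of the almost vertical curve $L_k\subset S^1\times J_{out}$. Using the inclusions $J_{in}\subset J_1\cap J_2$ and $J_{out}\subset J_1\cup J_2$, a brief case analysis shows that either (i) this $y$-projection fits inside some $J_j$ with $j\in\{1,2\}$, or (ii) it contains $J_{in}$: indeed, if it is contained in neither $J_j$, then its endpoints lie in $J_1\setminus J_2$ and $J_2\setminus J_1$, which forces it to contain $J_1\cap J_2\supset J_{in}$. In case~(i), I lift $L_k$ to a curve $\tilde L_k\subset\tilde X$ whose $x$-projection sits inside $[k_j-\delta,\,k_j+1+\delta]$ (the horizontal extent of $L_k$ is less than $\delta<1/2$, so one of the $d$ lifts is positioned correctly); then $\tilde L_k\subset[k_j-\delta,\,k_j+1+\delta]\times J_j\subset F(A_j\times J_{out})$ by condition~\ref{cond:2}, so I may set $L_{k+1}:=(F|_{A_j\times J_{out}})^{-1}(\tilde L_k)\subset A_j\times J_{out}$. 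It satisfies $f(L_{k+1})=L_k$ and $\operatorname{length}(L_{k+1})\geq\lambda\operatorname{length}(L_k)$. Since the length of an almost vertical curve in $S^1\times J_{out}$ is bounded above, case~(i) cannot recur indefinitely, and case~(ii) must occur at some smallest step $N$.

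At step $N$, I choose a sub-arc $L_N'$ of $L_N$ whose $y$-projection equals $J_{in}$ (it is a single arc because the almost vertical condition makes $L_N$ monotone in $y$), lift it to $\tilde L_N'\subset[k_3-\delta,\,k_3+1+\delta]\times J_{in}$, and set $S:=(F|_{U_3})^{-1}(\tilde L_N')$. Then $S$ is an almost vertical curve in $U_3\subset X$ with $f^{N+1}(S)\subset L$. To see that $S$ cuts $X$, I use that $F|_{U_3}:U_3\to F(U_3)$ is an orientation-preserving diffeomorphism, so $\partial F(U_3)=F(\partial U_3)$, and accordingly the two horizontal boundary components $\{y=\min I\}$ and $\{y=\max I\}$ of $U_3$ must be mapped to the two extremal $y$-components of $\partial F(U_3)\subset[k_3-\delta,\,k_3+1+\delta]\times\overline{\mathrm{Int}(J_{in})}$; the endpoints of $\tilde L_N'$ at $y=\min J_{in}$ and $y=\max J_{in}$ therefore pull back to points with $y=\min I$ and $y=\max I$, yielding that $S$ cuts $X$. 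The main obstacle is this last geometric identification: it requires a careful analysis of the shape of $F(U_3)$, in particular confirming that the chosen lift $\tilde L_N'$ actually lies in $F(U_3)$ and that its $y$-endpoints are matched with the correct horizontal boundary components of $U_3$. A small adjustment --- e.g.\ iterating a few extra steps past $N$ so that $L_N$ extends strictly beyond $J_{in}$ and then trimming $L_N'$ so that it lies inside the open set $F(U_3)$ --- is likely needed to make the argument rigorous, given that condition~\ref{cond:3} only provides $F(U_3)\subset[k_3-\delta,\,k_3+1+\delta]\times\mathrm{Int}(J_{in})$ rather than equality with the closed rectangle.
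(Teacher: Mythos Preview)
Your argument is essentially identical to the paper's: build a sequence $L_0=L,\,L_1,\dots$ by choosing at each step the inverse branch $(F|_{A_j\times J_{out}})^{-1}$ with $j\in\{1,2\}$ according to whether the $y$-projection of $L_i$ sits in $J_1$ or $J_2$, stop once the curve cuts $S^1\times J_{in}$, and then apply the inverse branch from condition~\ref{cond:3} once. The dichotomy you state (either $y$-proj $\subset J_j$ or $y$-proj $\supset J_{in}$) is exactly the one the paper uses, phrased slightly differently.

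Regarding the obstacle you flag at the end: the paper does not elaborate either --- it simply asserts ``by condition~\ref{cond:3}, the curve $S=F^{-1}(\tilde L')$ will cut $X$.'' Your proposed fix (iterate further and trim) does not by itself resolve the issue, because the containment in condition~\ref{cond:3} goes the wrong way to guarantee $\tilde L'_N\subset F(U_3)$. The missing ingredient is condition~\ref{cond:1}: the forward-invariant expanding horizontal cone field forces each horizontal circle $S^1\times\{y_0\}$ to map under $F$ to an almost horizontal curve wrapping around $S^1_d$, so its intersection with the strip $[k_3-\delta,\,k_3+1+\delta]\times I$ is a single almost horizontal arc spanning the full width of the strip. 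Thus $U_3$ is a full-height almost vertical strip and $F(U_3)$ is foliated by full-width almost horizontal arcs, one for each $y_0\in I$. Each such arc meets the almost vertical curve $\tilde L'$ transversally in exactly one point, and in particular the leaves coming from $y_0=\pm 1$ do. Hence $S=(F|_{U_3})^{-1}(\tilde L'\cap F(U_3))$ is a connected almost vertical curve reaching both $y=1$ and $y=-1$, i.e.\ it cuts $X$.
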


\begin{proof}
Set $L_0 = L$.
We construct a sequence $L_i$ of almost vertical curves lying in $S^1 \times J_{out}$ with $f(L_{i+1}) \subset L_i$ in the following way. We stop when we see that $L_i$ cuts the stripe $S^1 \times J_{in}$. Otherwise, since $J_{in} \;\subset\; J_1 \cap J_2$, the curve $L_i$ either lies in $S^1 \times J_1$ or in $S^1 \times J_2$. 

In the first case, when $L_i \;\subset\; S^1 \times J_1$, we lift\footnote{Note that we use the fact that our curve has horizontal projection of diameter at most $\delta$.} $L_i$ to $\tilde L_i \subset [k_1-\delta,\; k_1+1+\delta] \times J_1 \subset S^1_d \times I$. By condition~\ref{cond:2}, $\tilde L_i$ is inside the $F$-image of $A_1 \times J_{out}$.
Hence, we take $L_{i+1} = (F|_{A_1 \times J_{out}})^{-1}(\tilde L_i)$. This is again an almost vertical curve contained in $S^1 \times J_{out}$.
In the second case we proceed in the same way, but we lift $L_i$ to $\tilde L_i \subset [k_2-\delta, k_2+1+\delta] \times J_2$ and apply $(F|_{A_2 \times J_{out}})^{-1}$.

Since all $L_i$ are almost vertical and the chosen branches of $f^{-1}$ uniformly expand in the vertical cones, this process eventually stops when some $L_i$ cuts the cylinder $S^1 \times J_{in}$. Set $L' = L_i \cap (S^1 \times J_{in})$. Lift $L'$ to $\tilde L' \subset [k_3-\delta, k_3+1+\delta] \times J_{in}$.
By condition~\ref{cond:3}, the curve $S = F^{-1}(\tilde L')$ will cut~$X$.
\end{proof}

\begin{lem}\label{lem:stripe}
For any $f \in \U_X$ the cylinder $S^1 \times J_{out}$ is in $\omega(x)$ for a Baire-generic $x \in X$.
\end{lem}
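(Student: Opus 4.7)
The plan is to use a standard Baire-category argument. Fix a countable base $\{U_n\}$ of the subspace topology on $S^1 \times J_{out}$, and for each $n, N$ set
\[
    W_{n,N} \;=\; \bigcup_{k \geq N} f^{-k}(U_n).
\]
Each $W_{n,N}$ is open by continuity of $f$, and any $x \in \bigcap_{n,N} W_{n,N}$ visits every $U_n$ at arbitrarily large times, hence has $S^1 \times J_{out} \subseteq \omega(x)$. So it suffices to prove that every $W_{n,N}$ is dense in $X$, and then invoke the Baire category theorem.

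To show density, I would fix an open set $V \subseteq X$ and an open $U_n \subseteq S^1 \times J_{out}$ and produce $x \in V$ with $f^k(x) \in U_n$ for some $k \geq N$. First, shrink $U_n$ to an open set containing some small almost vertical curve $L$ (one can just pick a short vertical segment inside $U_n$, which will be tangent to the vertical cone field since vertical vectors lie in vertical cones). By Lemma~\ref{lem:backward}, there exists $n_0 > 0$ and an almost vertical curve $S$ that cuts $X$ with $f^{n_0}(S) \subset L \subset U_n$.

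Now I would use condition~\ref{cond:1}: the horizontal cone field is forward-invariant and tangent vectors in it are uniformly expanded by some factor $\lambda > 1$. Pick any short horizontal segment $\gamma \subset V$; then for every $m$, the image $f^m(\gamma)$ is an almost horizontal curve whose length is at least $\lambda^m \cdot \mathrm{length}(\gamma)$. Choosing $m \geq \max(N - n_0, m_0)$ where $m_0$ is large enough that the horizontal projection of $f^m(\gamma)$ has length exceeding $1$ (the circumference of $S^1$), the curve $f^m(\gamma)$ must wrap around $S^1$, and since $S$ cuts $X$ from top to bottom while $f^m(\gamma)$ is almost horizontal and long, the two curves intersect transversely. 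Any $x \in \gamma$ with $f^m(x) \in S$ then satisfies $f^{m+n_0}(x) \in L \subset U_n$ with $m + n_0 \geq N$, as required.

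I don't expect any step to be a serious obstacle, provided the earlier conditions do what they claim. The only mildly delicate point is making sure that ``$f^m(\gamma)$ of horizontal length $>1$ intersects any vertically-cutting curve $S$''; this uses that almost horizontal curves have vertical projection bounded by an analog of $\delta$ (so they stay in a thin horizontal strip and cannot avoid a curve that goes from top to bottom of $X$), which follows from the aperture bound on the cone fields. All the ingredients — the backward inverse-branch mechanism (Lemma~\ref{lem:backward}) and the forward horizontal expansion (condition~\ref{cond:1}) — are already in place.
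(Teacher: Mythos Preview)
Your proposal is correct and follows essentially the same route as the paper: a Baire-category argument via the open sets $\bigcup_{k\ge N} f^{-k}(U)$, density established by pulling back an almost vertical curve in $U$ via Lemma~\ref{lem:backward} to a curve $S$ cutting $X$, and then pushing forward a short horizontal segment until it meets $S$. The only cosmetic difference is that the paper phrases the target sets as balls with rational centers and radii rather than a countable base, and it does not spell out the intersection argument between the long almost-horizontal image and $S$ any more than you do; your ``thin strip'' justification is slightly imprecise (the real point is the topological crossing of a left--right curve with a bottom--top curve in a lifted rectangle), but the conclusion is correct.
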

\begin{proof}
Consider a small open ball $U \subset S^1 \times J_{out}$.
For any $N \in \mathbb N$, denote by $A_{N,U}(f)$ the set of points $x \in X$ such that for some $n>N$ we have $f^n(x) \in U$. This set is obviously open since $A_{N,U}(f) = \cup_{i=N+1}^{\infty} f^{-i}(U)$. We will show that it is dense in~$X$. Then the points in a residual set $\cap_N A_{N,U}(f)$ visit $U$ infinitely many times, i.e., for a generic point $x \in X$ the set $\omega(x)$ intersects the closure of $U$. Intersecting these residual sets over the balls $U$ with rational centers and radii, we conclude that $\omega(x)$ is dense in $S^1 \times J_{out}$ for generic $x \in X$. 

To show that any $A_{N,U}(f)$ is dense, we take an arbitrary almost vertical curve $L \subset U$ and construct an almost vertical curve~$S$ cutting $X$ with $f^k(S) \subset L$, as in Lemma~\ref{lem:backward}. 
Take any small (strictly) horizontal segment $H$ and iterate it forward. It will be expanded and eventually for all large enough $m$ the set $f^m(H)$ will be intersecting~$S$. If $m + k > N$, the points in $H \cap f^{-m}(S)$ belong to $A_{N,U}(f)$. Since $H$ was arbitrary, $A_{N,U}(f)$ is dense in~$X$.
\end{proof}

\begin{lem}\label{lem:Milnor}
If $f \in C^2(X) \cap \U_X$, the stripe $S^1 \times J_{out}$ is in $\omega(x)$ for almost all $x \in X$ w.r.t. the Lebesgue measure.
\end{lem}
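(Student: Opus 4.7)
The plan is to upgrade the Baire argument of Lemma~\ref{lem:stripe} to a Lebesgue-measure statement by replacing the density of $f^{-n}(U)$ by a positive-proportion lower bound; this is the standard payoff of a bounded-distortion argument for $C^2$ maps with uniform cone expansion.

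First, fix a countable base $\{U_i\}$ of open balls in $S^1 \times J_{out}$. By countable subadditivity it is enough to show that for every $U = U_i$ and every $N \in \mathbb{N}$ the set
\[
E_{N,U} = \{x \in X : f^n(x) \notin U \text{ for all } n > N\}
\]
has Lebesgue measure zero. I would apply Fubini to the horizontal foliation $\{S^1 \times \{y\}\}_{y \in I}$ and reduce to the one-dimensional statement that, for Lebesgue-a.e.\ $y$, the section $E_{N,U} \cap H_y$ (with $H_y = S^1 \times \{y\}$) has vanishing one-dimensional measure. Arguing by contradiction, I would fix such a $y$ and let $x_0$ be a Lebesgue density point of the section.

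Now I apply Lemma~\ref{lem:backward} to an almost vertical curve $L \subset U$ to produce an almost vertical curve $S$ cutting $X$ and an integer $k$ with $f^k(S) \subset L \subset U$. For a small horizontal interval $I \subset H_y$ centered at $x_0$, condition~\ref{cond:1} guarantees that $f^m(I)$ is a curve tangent to the horizontal cone of length at least $\lambda^m|I|$ for some $\lambda > 1$. Choose $m$ minimal such that $f^m(I)$ cuts the cylinder around; by shrinking $|I|$ I may assume $m + k > N$. The curve $f^m(I)$ then meets $S$ transversally, and near a crossing there is a fixed-size horizontal neighborhood $V \subset f^m(I)$ that $f^k$ sends into $U$.

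The key ingredient is a bounded distortion estimate: for $f \in C^2$ and any subinterval $J \subset I$ on which $f^m$ is injective,
\[
\frac{\sup_J \|(f^m|_{H_y})'\|}{\inf_J \|(f^m|_{H_y})'\|} \leq C
\]
with $C$ independent of $m$ and $I$. The usual telescoping proof goes through: parametrize $\gamma_j = f^j(I)$ by arc length, sum the differences of $\log\|Df|_{T\gamma_j}\|$ along the orbit, bound each summand by $\|D^2 f\|_\infty$ times the arclength on $\gamma_j$, and use the uniform expansion $|\gamma_j|_{[a,b]} \leq \lambda^{-(m-j)}|\gamma_m|_{[a,b]}$ to collapse the telescoping into a convergent geometric series. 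With this estimate, the neighborhood $V$ pulls back to a subinterval $I' \subset I$ with $|I'| \geq c|I|$ for an absolute $c > 0$, and an analogous bounded distortion estimate for $f^k$ on $V$ yields $I'' \subset I'$ with $|I''| \geq c'|I|$ and $f^{m+k}(I'') \subset U$. Since $m+k>N$, $I''$ is disjoint from $E_{N,U} \cap H_y$, contradicting that $x_0$ is a density point of this set. The main obstacle is setting up distortion control uniformly for forward iterates of curves tangent to the horizontal cone rather than for strictly horizontal segments, but the cone aperture is small and independent of the point, so the classical one-dimensional telescoping argument carries over with only cosmetic changes.
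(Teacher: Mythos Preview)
Your argument is correct and follows essentially the same route as the paper's proof: both reduce via Fubini to horizontal sections, pick a Lebesgue density point of the bad set, iterate a short horizontal interval forward until its image wraps around the cylinder and crosses the curve $S$ produced by Lemma~\ref{lem:backward}, and then invoke $C^2$ bounded distortion (the paper cites \cite[Lemma~3.3]{BV}; you sketch the telescoping argument) to conclude that a fixed proportion of the interval eventually lands in $U$, contradicting the density-point hypothesis. The only cosmetic difference is that the paper first fixes a tubular neighbourhood $V$ of $S$ with $f^k(V)\subset U$ and proves the full-measure statement for $A_{N,V}$, which spares you the separate distortion estimate for $f^k$ that you mention (and which is in fact unnecessary, since once $f^m(I')\subset V$ you already have $f^{m+k}(I')\subset U$).
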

\begin{proof}
We take a small open ball $U \subset S^1 \times J_{out}$ and show
that for any $N$ the set $A_{N,U}(f)$ has full measure. We fix a vertical curve $L \subset U$, construct a curve $S \subset f^{-\zeta}(L)$ that cuts $X$ as above, and consider a small neighborhood $V \supset S$ with $f^\zeta(V) \subset U$. For simplicity we assume that $V$ is the union of the images of $S$ under small horizontal shifts of size less than~$\e$. 

Let us show that, for every $N$, for Lebesgue almost any $x \in X$ there is $m\geq N$ with $f^m(x) \in V$, i.e., that the set $A_{N, V}(f)$ has full measure. Suppose that for some $N$ it does not. For brevity, denote this set $A_{N, V}(f)$ by $Y$ and its complement by $\overline{Y}$. By the Fubini theorem, the restriction of $\overline{Y}$ to some horizontal segment has a density point w.r.t. the one-dimensional Lebesgue measure. Then for any $k$ there is a subsegment $I_k$ such that
\[{\rm Leb}\,(Y\cap I_k)\; /\; {\rm Leb}\,(I_k) < 1/k.\] 

We fix some $k$ and iterate $I_k$ forward until $f^n(I_k)$ cuts\footnote{We say that a curve cuts through $V$ if there is an arc contained in $V$ with endpoints at the different components of $\partial V \setminus \partial X$, i.e. at $S - (\e, 0)$ and $S + (\e, 0)$.} through $V$ and $n$ is larger than the constant~$N$, i.e., we stop when both conditions are satisfied.
By the mean value theorem, there exist $x,y\in I_k$ such that
\[
{\rm Leb}\,(f^n(I_k\cap Y))=J^n(y)\, {\rm Leb}(Y\cap I_k) \text{ and }
{\rm Leb}\,(f^n(I_k)) = J^n(x)\,{\rm Leb}(I_k),
\]
where $J^n(x) = \|Df^n(x))|_{T_xI_k}\|$.

Moreover, we can assume that the length of $f^n(I_k)$ is bounded by some constant~$C$ independent of $n$ and $k$: when we see that the projection of $f^n(I_k)$ to $S^1$ is the whole circle and $n$ is greater than~$N$, we do one additional iteration to make sure that $f^n(I_k)$ cuts through $V$ and stop iterating. Since $f^n(I_k)$ is tangent to the horizontal cone field, it gives an upper bound on the length of $f^n(I_k)$. This allows to apply a distortion control argument. More precisely, since $f$ is $C^2$-smooth, we can apply Lemma 3.3 of~\cite{BV}\footnote{Lemma 3.3 of~\cite{BV} is stated for diffeomorphisms, but the proof is also valid in our case.}, which yields that there is a constant $K$ independent of $n$ (and $k$) such that $1/K<J^n(y)/J^n(x)<K$ for any $x, y \in I_k$. Therefore,
\[
    \frac{\mathrm{Leb}\,(f^n(I_k\cap Y))}{\mathrm{Leb}\,(f^n(I_k))} = 
    \frac{J^n(y)\, \mathrm{Leb}(I_k \cap Y)}{J^n(x)\, \mathrm{Leb}(I_k)} <
    \frac{K}{k},
\]

and the measure of $f^n(I_k \cap Y)$ is at most $CK/k$. By choosing a sufficiently large~$k$, we can make this measure arbitrarily small; this makes $n$ larger but does not affect $C$ and~$K$. On the other hand, it is not difficult to see that the length of a curve by which $f^n(I_k)$ cuts $V$ is bounded from below. Hence, some points of this intersection are in $f^n(\overline{Y})$, with $n > N$, which contradicts the definition of $\overline{Y}$. The contradiction shows that the set $Y = A_{N, V}(f)$ has full Lebesgue measure for every~$N$. 
As $A_{N, V} \subset A_{N, U}$, the set $A_{N, U}$ also has full measure. The intersection of these sets over $N \in \mathbb{N}$ and over all balls $U \subset S^1 \times J_{out}$ with rational centers and radii is the required set of full measure.  
\end{proof}

\begin{lem}\label{lem:generic}
For a $C^1$-generic endomorphism $f$ in $\U_X$, the stripe $S^1 \times J_{out}$ is in $\omega(x)$ for almost all $x \in X$ w.r.t. the Lebesgue measure.
\end{lem}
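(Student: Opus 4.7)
The plan is to bootstrap the $C^1$-generic case from the $C^2$ case (Lemma~\ref{lem:Milnor}) by expressing the desired conclusion as a countable intersection of open-dense conditions on $f \in \U_X$. Fix once and for all a countable basis $\{U_j\}_{j \in \nn}$ of open balls in $S^1 \times J_{out}$ (with rational centres and radii). Exactly as in Lemma~\ref{lem:stripe}, if for every $N, j \in \nn$ the open set $A_{N, U_j}(f) = \bigcup_{i > N} f^{-i}(U_j)$ has full Lebesgue measure in $X$, then for almost every $x$ the $\omega$-limit $\omega(x)$ meets each $U_j$, hence is dense in $S^1 \times J_{out}$, and (being closed) contains $S^1 \times J_{out}$. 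Thus it suffices to show that for each triple $(N, j, n) \in \nn^3$ the set
\[
G_{N, j, n} \;=\; \{\, f \in \U_X : \mathrm{Leb}(A_{N, U_j}(f)) > \mathrm{Leb}(X) - 1/n \,\}
\]
is open and dense in $\U_X$; the required residual set will then be $\bigcap_{N, j, n} G_{N, j, n}$.

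Density is immediate from Lemma~\ref{lem:Milnor} combined with the standard fact that $C^2(X) \cap \U_X$ is $C^1$-dense in $\U_X$ (since $\U_X$ is $C^1$-open and smooth maps are $C^1$-dense): on this dense subset $A_{N, U_j}$ has full measure, so every threshold $\mathrm{Leb}(X) - 1/n$ is exceeded. The openness of $G_{N, j, n}$ is the step I expect to require real work, because the definition of $A_{N, U_j}(f)$ involves an infinite union of preimages, and the tail $\bigcup_{i \geq M} f^{-i}(U_j)$ is not under $C^1$-perturbative control. My plan is to truncate to finitely many iterates before perturbing: given $f \in G_{N, j, n}$, inner regularity yields a compact $K \subset A_{N, U_j}(f)$ with $\mathrm{Leb}(K) > \mathrm{Leb}(X) - 1/n$; compactness of $K$ then provides a finite $M$ and a finite open cover $\{W_\alpha\}$ of $K$ such that $f^{i_\alpha}(\overline{W_\alpha}) \subset U_j$ for some $N < i_\alpha \le M$. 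For $g$ sufficiently $C^1$-close to $f$, each $g^{i_\alpha}$ is uniformly $C^0$-close to $f^{i_\alpha}$ on the compact $\overline{W_\alpha}$, so $g^{i_\alpha}(\overline{W_\alpha}) \subset U_j$ still, whence $K \subset A_{N, U_j}(g)$ and $g \in G_{N, j, n}$.

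The main obstacle is precisely this openness step: one must witness the measure lower bound on $A_{N, U_j}(f)$ by a \emph{finite} portion of the defining union before passing to $g$, since $C^1$-closeness says nothing about the tail. Once that truncation is in place, the rest reduces to continuity of $g \mapsto g^i$ in the $C^0$-topology on the compact $X$ for each fixed $i$, which is standard, and the proof is complete.
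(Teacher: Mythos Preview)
Your proposal is correct and follows essentially the same route as the paper: reduce to showing that each set $G_{N,j,n}$ (the paper calls it $B(N,U,m)$) is open and dense in $\U_X$, get density from Lemma~\ref{lem:Milnor} and $C^1$-density of $C^2$ maps, and get openness by truncating $A_{N,U}(f)$ to finitely many iterates before perturbing. The only cosmetic difference is that the paper packages the openness step as lower semi-continuity of $f\mapsto \mathrm{Leb}\big(\bigcup_{i=N+1}^{K} f^{-i}(U)\big)$ and then takes the supremum over $K$, whereas you unpack this via inner regularity and a finite compact cover; these are two phrasings of the same argument.
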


\begin{proof}
Take a small open ball  $U \subset S^1 \times J_{out}$. As above, $A_{N,U}(f)$ is the set of points $x$ such that for some $n>N$ we have $f^n(x)\in U$. Obviously, $A_{N,U}(f) = \cup_{i=N+1}^{\infty} f^{-i}(U)$.
The measure of $f^{-1}(U)$ lower semi-continuously depends on $f$, so the measure of the union ${\rm Leb}(\cup_{i=N+1}^{K} f^{-i}(U))$ also is a lower semi-continuous function of $f$, for any $K$. Therefore ${\rm Leb}(A_{N,U}(f)) = \sup_{K} {\rm Leb}(\cup_{i=N+1}^{K} f^{-i}(U))$ is a lower semi-continuous function.\footnote{We can choose $K$ such that the measure of the finite union $\cup_{i=N+1}^{K} f^{-i}(U)$ is $\e$-close to $\mathrm{Leb}(A_{N,U}(f))$. When $f$ is slightly perturbed, the measure of the finite union decreases by at most $\e$, so $\mathrm{Leb}(A_{N,U}(f))$ can decrease buy at most $2\e$, hence lower semicontinuity.} Thus the set $B(N,U,m)$ of all $C^1$-maps $f$ such that ${\rm Leb} (A_{N,U}(f)) > 1-1/m$ is open. The set $B(N,U,m)$ is also dense in $\U$, as it contains any $C^2$-map (by Lemma~\ref{lem:Milnor}) and $C^2(X)$ is dense in $C^1(X)$.

For $f$ in a residual set $B(N,U)=\cap_m B(N, U, m)$ we have ${\rm Leb} A_{N,U}(f)=1$.
Finally, we intersect the sets $B(N,U)$ over $N \in \mathbb{N}$ and balls $U \subset S^1 \times J_{out}$ with rational centers and radii and obtain a residual subset $\R_X$ of $\U_X$ with the required property.

\end{proof}

\begin{lem}[Example on the cylinder] \label{l:main}
\;

\begin{itemize}
    \item For any cylinder endomorphism in $\mathcal U_X$, the non-wandering set and the generic limit set both have non-invariant interior.
    \item For any cylinder endomorphism in the set $\mathcal R_X$ from Lemma~\ref{lem:generic}, the Milnor attractor also has non-invariant interior.
    \item $\mathcal R_X$ is a $C^1$-residual subset of $\mathcal U_X$; it contains all $C^2$-endomorphisms of $\U_X$.
\end{itemize}
\end{lem}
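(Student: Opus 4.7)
The plan is to combine the three preceding lemmas with condition~\ref{cond:4} to exhibit a point in the interior of each target set whose image is pushed onto the boundary. The same geometric argument will work uniformly for the generic limit set, the non-wandering set, and (for $f \in \mathcal{R}_X$) the Milnor attractor.

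First, I would establish that the closed strip $S^1 \times J_{out}$ is contained in each of the three sets. By Lemma~\ref{lem:stripe}, for a Baire-generic $x \in X$ we have $\omega(x) \supset S^1 \times J_{out}$; by the definition of the generic limit set, $A_{gen}(f) \supset \omega(x)$ for every such $x$, so $A_{gen}(f) \supset S^1 \times J_{out}$. Since every $\omega$-limit point is non-wandering, the same inclusion holds for the non-wandering set. For $f \in \mathcal{R}_X$, Lemma~\ref{lem:generic} upgrades the conclusion to Lebesgue-a.e.\ $x$, whence $A_{Mil}(f) \supset S^1 \times J_{out}$. In particular, the open rectangle $\Pi \subset S^1 \times \mathrm{Int}(J_{out})$ from condition~\ref{cond:4} lies in the interior of each of the three sets.

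Next, I would use condition~\ref{cond:4} to locate the point $p \in \Pi$ whose image cannot be interior. Let $y_{\max}$ be the maximum of the $y$-coordinate on $f(X)$. By condition~\ref{cond:1} we have $f(X) \subset \mathrm{Int}(X)$, so $y_{\max} < \sup I$; by condition~\ref{cond:4} this maximum is attained precisely at $f(p)$ with $p \in \Pi$, and every neighborhood of $f(p)$ in $X$ contains points $(x, y)$ with $y > y_{\max}$, which lie outside $f(X)$. To finish I need each target set to be contained in $f(X)$: for $A_{gen}(f)$ and $A_{Mil}(f)$ this follows from $\omega(x) \subset \bigcap_{n \geq 1} f^n(X) \subset f(X)$, and for the non-wandering set it follows because any point outside the closed set $f(X)$ admits a neighborhood whose forward iterates all lie in $f(X)$ and so miss the neighborhood itself. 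Consequently $f(p)$ has no neighborhood lying inside any of the three sets, so it is not in their interior, while $p$ is; this is non-invariance.

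Finally, the third bullet is bookkeeping: $\mathcal{R}_X$ was constructed to be $C^1$-residual in $\mathcal{U}_X$ in Lemma~\ref{lem:generic}, and its defining property is precisely the conclusion of Lemma~\ref{lem:Milnor}, which holds for every $C^2$-map in $\mathcal{U}_X$, so $C^2(X) \cap \mathcal{U}_X \subset \mathcal{R}_X$. The argument is mostly an assembly of the earlier lemmas; the only step requiring a moment of genuine thought is the inclusion of the non-wandering set in $f(X)$, which I regard as the main (though minor) obstacle — it is a short point-set argument, but it is what allows condition~\ref{cond:4} to do its work.
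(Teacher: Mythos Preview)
Your proposal is correct and follows essentially the same approach as the paper: use Lemmas~\ref{lem:stripe}--\ref{lem:generic} to place $S^1 \times J_{out}$ (hence $\Pi$ and the point $p$) in the interior of the relevant set $A$, then use condition~\ref{cond:4} together with $A \subset f(X)$ to see that $f(p)$ lies on the boundary of $f(X)$ and hence cannot be interior to $A$. The paper's version is terser --- it simply asserts ``$A$ must be contained in $f(X)$'' --- whereas you spell out the short point-set argument for the non-wandering set; this extra care is welcome but not a different idea.
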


\begin{proof}
Let $f \in \U_X$ and $A$ be either the non-wandering set of the generic limit set. Consider the point $p$ from condition~\ref{cond:4}. The point $p$ is in the open cylinder $S^1 \times J_{out}$ contained in $A$ (Lemma~\ref{lem:stripe}), so $p \in \Int \; A$. By invariance, $f(p)\in A$. Since $A$ must be contained in $f(X)$, any point of $A$ at the boundary of $f(X)$ is at the boundary of~$A$.
So, $f(p) \in \partial A$. We have found a point in the interior of $A$ that is taken by $f$ to the boundary of~$A$. This proves the first claim.

Now take any $f \in \R_X$. For such $f$, the Milnor attractor $A_{Mil}$ contains the stripe $S^1 \times J_{out}$ by Lemma~\ref{lem:generic}, and the previous argument shows that the interior of $A_{Mil}$ is not invariant as well. This proves the second claim.
Finally, $\R_X$ was defined in Lemma~\ref{lem:generic} as a countable intersection of residual sets that contain every $C^2$-map in $\U_X$, which yields the third claim.
\end{proof}

\subsection{The initial skew product}\label{sec:skew}
In this section we construct a skew product map that satisfies conditions~\ref{cond:1}-\ref{cond:4} of Section~\ref{sec:proof}.
We set $I = [-1, 1]$ and define our skew product $f \colon X \to X,\; X = S^1\times [-1,1],$ by the formula
\[
(\t,y) \mapsto (d\cdot \t, \ef_\t(y)),
\]
where $\t\in S^1,\; y\in [-1,1]$, and the degree $d\in \mathbb N$ and the fiber maps $f_\t$ are to be specified.

First we need to choose the subsegments $J_{in}, J_{out}, J_1, J_2$ in $I$ that satisfy~\eqref{eq:inclusions}. One can fix some numbers $a, b$ such that  $1 > 3a > b > 2a > 0$ and take $J_1=[-a, b]$, $J_2=[-b,a]$, $J_{out}=[-2a,2a]$, $J_{in}=[-a/2,a/2]$.

We introduce two contractions $\psi_{1}$ and $\psi_{2}$ that take the segment $I = [-1,1]$ strictly in itself so that the interior of the image of $J_{out}$ contains $J_{1}$ and $J_2$, respectively, \footnote{One can choose segments $V_i, \; i = 1, 2,$ such that $J_i \subset \intt{V_i}$ and the length of $V_i$ is less than the length of $J_{out}$ for $i=1,2$. Now set $\psi_{i}$ to be the linear contraction of $J_{out}$ onto $V_i$. If $V_i$ are sufficiently close to $J_i$, $\psi_{i}$ maps $J_{out}$ strictly into $I$, and can be continued to a non-linear contraction of $I$ into itself.}
and a map $\psi_3$ that linearly contracts $I$ to a segment in the interior of~$J_{in}$.
We also need a special map $\psi_4 \colon I \to I$ of the form $y \mapsto \alpha y^2+ \beta$, where $\beta \in (\max_{y\in I}\{\psi_1(y), \psi_2(y)\}, 1)$ and $\alpha$ is negative and small in absolute value,  so that we have $\psi_4(I)\subset I$. For this map, the maximum is attained at $y = 0$ and is equal to $\beta$.

Fix small positive $\e$ and choose four disjoint circle arcs $A_1, A_2, A_3, A_4$ with distance between them greater than~$\e$. Now we can describe our fiber maps $f_\t$: we set $\ef_\t=\psi_i$ on $A_i$, and on $S^1\setminus (\cup A_i)$ the maps $\ef_\t$ can be pretty much arbitrary, provided that they smoothly depend on $\t \in S^1$ and satisfy $\ef_\t < \beta$ for $\t$ outside~$A_4$. If we choose $d$ large enough, our skew product will be partially hyperbolic with any prescribed invariant horizontal cone field. Moreover, we will have 
$d \cdot A_i \supset [k_i - \delta, k_i+1 + \delta]$ for some integers $k_i$ and $i = 1,\dots, 4$.

Now it is easy to see that if one takes a family of sufficiently narrow vertical cones (to define almost vertical curves) the skew product $f$ will satisfy conditions~\ref{cond:1}-\ref{cond:3} of Section~\ref{sec:proof}. Condition~\ref{cond:4} is satisfied with $(B, D)$ being a small neighborhood of the arc $A_4$: the maximum $\beta$ of the vertical coordinate of $f(x)$ is attained for $x \in A_4 \times \{0\}$, and the maximum over $X \setminus ((B,D) \times \intt(J_{out}))$ is less than~$\beta$.

\subsection{Extending the skew product to an arbitrary surface} \label{ss:emb}

Let us now explain why the skew product on the cylinder we have built in Section~\ref{sec:skew} can be extended as a $C^\infty$-endomorphism $\hat f$ of an arbitrary two-dimensional surface~$M$.

Denote the skew product built in Section~\ref{sec:skew} by $\bar f$.
Consider any embedding of the cylinder~$X$ into~$M$. Consider a small neighborhood $U$ of $X$ that is also a topological cylinder. Our skew product $\bar f$ can be continued\footnote{We can take an advantage of the skew product structure and construct a skew product defined on $S^1 \times [-1.1, 1.1]$ such that its restriction to $X=S^1 \times [-1, 1]$ coincides with $\bar f$ in the same way we have built~$\bar f$.}  as a map $\bar f: U \mapsto U$. Let $\varphi: M \mapsto [0, 1]$ be a $C^\infty$-function with $\varphi=1$ on $X$ and $\varphi=0$ on $M \setminus U$.
Then $\hat f = \varphi \bar f + (1-\varphi) \id$ (here $\id$ is the identity map) is a $C^{\infty}$-endomorphism on $M$ such that its restriction to $X$ coincides with~$\bar f$.

\subsection{Proof of the main theorem}

 Consider an arbitrary two-dimensional surface $M$. In Sections~\ref{sec:skew} and \ref{ss:emb} we have embedded the cylinder $X$ in $M$ and constructed an endomorphism $\hat{f}$ such that its restriction to $X$ is in~$\mathcal U_X$. Take as $\mathcal U$ the set of all $C^1$-endomorphisms $f$ of $M$ such that $f|_X$ is in~$\mathcal U_X$. This set is open (as $\mathcal U_X$ is open) and non-empty, since it contains~$\hat f$. 
 
 We denote the non-wandering set of an arbitrary map $g$ by $\Omega(g)$. Take any $f \in \mathcal U$.
 By Lemma~\ref{l:main}, the interior of $\Omega(f|_X)$ is non-invariant. 
 Note that $X$ is a trapping region for $f$. Thus, $\Omega(f) \cap X = \Omega(f|_X)$, and $f(\Omega(f|_X)) \subset \intt X$. This implies that the interior of $\Omega(f)$ is non-invariant as well: any point in $\Omega(f|_X)$ mapped to the boundary of this set is mapped to the boundary of $\Omega(f)$ as well.
 The same reasoning applies to the generic limit set. 

 Take as $\mathcal R$ the set of all $C^1$-endomorphisms $f$ of $M$ such that $f|_X$ is in~$\mathcal R_X$. This set is residual in $\mathcal U$ and contains all $C^2$ endomorphisms; this is inherited from the properties of $\mathcal R_X$ (Lemma~\ref{l:main}).
 By Lemma~\ref{l:main} the Milnor attractor of $f|_X$ has non-invariant interior for all $f \in \mathcal R$. Applying the same reasoning as for the non-wandering set, we see that the Milnor attractor of $f$ has non-invariant interior as well.

\appendix

\section{A trivial open set of continuous maps whose attractor has non-invariant interior} \label{s:non-manifold}

An example of attractor with non-invariant interior can be easily constructed if the phase space does not have to be connected and does not have to be a manifold.
Indeed, let our phase space be $[-1,1]\cup 2$, and let $f([-1,1]) = \{2\}$ and $f(2) = 0$. Clearly, $f$ is continuous. Every orbit of $f$ contains the point~$2$, and therefore contains a ball of radius $\frac{1}{2}$ centered at~$2$ (this ball coincides with $\{2\}$). The attractor $A$ of $f$ is $\{0, 2\}$, with 2 being an interior point of~$A$ and 0 being a boundary point. Moreover, the interior point 2 of $A$ is taken into a boundary point~0. Therefore, the interior is non-invariant.

 It is easy to see that small perturbations of this map do not change its behavior, and the interior part of the attractor remains non-invariant. Therefore, we have an open set with the required property.

\section{No attractors with non-invariant interior in \texorpdfstring{$C^1$}{C1}-generic one-dimensional dynamics.} \label{s:1D}

In this section we show that $C^1$-generic maps of a circle or a segment cannot have attractors with non-invariant interior. This follows from a theorem of M. Jacobson~\cite[Theorem~A]{J}. Hence, our two-dimensional example from Section~\ref{sec:proof} has the smallest possible dimension when it comes to the $C^1$-results on interior non-invariance. 

First we consider generic circle maps. Denote by $\Sigma$ the set of all points of $S^1$ which do not belong to a basin of a hyperbolic attracting periodic orbit. This set $\Sigma$ is closed.
According to M. Jacobson~\cite[Theorem~A]{J}, there is an open and dense subset of $C^1(S^1)$ in which every map either is conjugated to a circle covering (and in this case $\Sigma=S^1$) or has totally disconnected set~$\Sigma$.  

In the first case the generic limit set is the circle. For any other closed forward invariant attractor without wandering points (in particular, for $A_{Mil}$) we can use the following idea. Suppose this attractor has non-empty interior. Then it contains an interval, and, by invariance, it contains every periodic orbit that intersects this interval. But for maps conjugated to circle coverings every pair of open sets is connected by a periodic orbit. Therefore, the attractor is dense in $S^1$ and, since it is closed, coincides with the whole~$S^1$.

In the second case we argue that the basins of sinks do not belong to the attractor, except for the sinks themselves. Therefore the attractor is contained in the union of~$\Sigma$ and the set of hyperbolic sinks. These sets are separated by the basins of attraction, and both are totally disconnected. Thus our attractor is totally disconnected and does not contain an interval.

Now we consider the case of maps of a segment into itself. Any map of a segment, even orientation reversing, can be viewed as a restriction of a circle map: we just add a repelling domain with a single hyperbolic source. Hence, a locally generic map of a segment may be viewed as a restriction of a locally generic circle map with a totally disconnected set $\Sigma$. Therefore, its attractor has empty interior.\\

\section{Generalization to manifolds of higher dimension}
\label{s:3d}

In this section we explain how to modify the example presented above so that we had the same result as in the main Theorem~\ref{thm:main}, but with $M$ being an arbitrary manifold of dimension at least 2, and not just an arbitrary surface.

We fix the dimension of the phase space $M$, say $n + 1$, construct an open set $\U_X$ of endomorphisms of $X = S^1 \times D^n$, where $D^n$ is a unit $n$-dimensional disc, and then ``extend'' the example to $M$ exactly as in Section~\ref{ss:emb}.
The argument for $\U_X$ must work as outlined in the ``Sketch of the proof'' part of Section~\ref{sec:proof}, but with almost vertical curves replaced with almost vertical hypersurfaces (we will write ``vertical surfaces'' for brevity). We take an $n+1$ dimensional analogue $S^1 \times B$ of $S^1 \times J_{out}$, fix an arbitrary vertical surface inside it and, informally, iterate it backwards until we find, in the preimage, a vertical surface that cuts $X$. It is this backward iteration part that must be explained, i.e., we must discuss the conditions that will give us an analogue of Lemma~\ref{lem:backward}. After that, Lemmas~\ref{lem:stripe}-\ref{l:main} and their proofs can be repeated almost verbatim, which would finish the proof.

\subsubsection*{The modification}
There is an example of an $n$-dimensional box that is covered by the union of its images under two affine contractions.\footnote{See, e.g., \cite[Section 3.1]{V}: one can fix $\lambda < 1$ close to $1$, take a box $B = [-r_1, r_1] \times \dots \times [-r_n, \; r_n]$ with $r_{j+1} < \lambda r_j\; (\forall j)$ and $\lambda r_n > r_1/2$, and apply first the isometry $R \colon (x_1, \dots, x_n) \mapsto ((-1)^{n+1}x_n, x_1, \dots, x_{n-1})$ and then the contraction $\Lambda = \lambda\cdot \id$. Then $B$ is covered by the images of $\Lambda R(B)$ under two appropriate translations along the first coordinate.} Let $B \subset D^n$ be a (closed) box like that and $\psi_1, \psi_2$ be the contractions: we have $B \subset \intt(\psi_1(B)) \cup \intt(\psi_2(B))$. We may assume, after changing the size of $B$, that $\psi_1, \psi_2$ take $D^n$ into its interior. Let $J_1 \subset \psi_1(B),\; J_2\subset \psi_2(B)$ be two slightly smaller boxes such that the union of their interiors covers $B$. We replace condition~\ref{cond:2} from Section~\ref{sec:proof} with the following.

\medskip
\noindent{\bf Condition $\hat{2}$.} For $j = 1, 2$, there is a closed arc $A_j \subset S^1$ and an integer $k_j$ such that 
    \[
        [k_j-\delta,\; k_j+1+\delta] \times J_j \;\subset\; 
        \mathrm{Int}\,(F(A_j \times B))
    \]
    and $F$ restricted to $A_j \times B$ is an orientation-preserving diffeomorphism onto the image with the inverse that preserves and uniformly expands vertical cones.
    
\medskip
Here $F\colon S^1 \times D^n \to S^1_d \times D^n$ is the lift of $f\colon X \to X$, and some vertical cone field with small aperture is fixed. This condition is open, and it is easy to see that it is satisfied for the initial skew product if we take the two affine contractions $\psi_1, \psi_2$ as the corresponding fiber maps of the skew product.

Note that since $B \subset \intt(J_1) \cup \intt(J_2)$, there is $\e > 0$ such that the box $B$ is a union of two subsets $B_1$ and $B_2$ whose $\e$-neighborhoods are contained in, respectively, $J_1, J_2$. One can take any $\e$ smaller than the Lebesgue number of the cover $\intt(J_1) \cup \intt(J_2)$; we also make sure that $\e < \delta$. Then the $\e$-neighborhood of any point in $S^1 \times B$ can be appropriately lifted to $S^1_d \times D^n$ (as in Lemma~\ref{lem:backward}) and then iterated backwards by $F|_{A_1 \times B}$ or $F|_{A_2 \times B}$, with preimage being in $S^1 \times B$.

After replacing $\e$ with a smaller number, we may assume that any vertical surface in $S^1 \times B$ of diameter less than $\e$ is expanded when we take these preimages. Indeed, by condition~{$\hat{2}$} the vertical cones are uniformly expanded when we iterate backwards. This implies that small vertical surfaces are expanded, e.g., in the following sense: if a piece of a vertical surface projects into an $r$-ball in $D^n$, then the preimage of this piece must contain a $\sigma r$-ball in its projection, where $\sigma > 1$ is a universal constant that governs the expansion. It is easy to see now that, starting with an arbitrary vertical surface in $S^1 \times B$, we can iterate it backwards until we find a vertical surface in the preimage with a projection to $D^n$ that contains a ball of diameter $\e/2$. We will call such vertical surfaces $\e/2$-large.

Let $\{p_1, \dots, p_N\}$ be an $\e/8$-net in $B$ and let $\psi_j, \; j = 3,\dots, N+2$ be affine contractions of $D^n$ into the $\e/8$-neighborhoods of the corresponding points $p_j$. If we utilize these maps as fiber maps of the initial skew product $f$ (for $\t$ in arcs $A_j, \; j = 3, \dots, N+2$, instead of the old $\psi_3$ that was used for $\t \in A_3$), we will have that for any $\e/2$-large vertical surface the $f$-preimage contains a vertical surface that cuts $X$. The same will hold for any map close to this skew product.
This yields the analogue of Lemma~\ref{lem:backward}.

Finally, it is easy to construct a map $\psi_{N+3}$ that folds $D^n$ so that the fold intersects $\intt(B)$ and the image of the fold is close to the boundary of $D^n$ to make sure that the analogue of condition~\ref{cond:4} holds.

\vskip 5mm

\noindent Stanislav Minkov

\noindent {\footnotesize{E-mail : stanislav.minkov@yandex.ru}}

\vskip 5mm

\noindent Alexey Okunev

\noindent {\footnotesize{E-mail : abo5297@psu.edu}}

\vskip 5mm

\noindent Ivan Shilin

\noindent {\footnotesize{E-mail : i.s.shilin@yandex.ru}}


\begin{thebibliography}{100}


\bibitem[ABD]{ABD} Abdenur, F., Bonatti, C., Díaz, L. J.: Non-wandering sets with non-empty interiors. Nonlinearity, 17(1), 175 (2003).

\bibitem[BV]{BV} Bonatti, C., Viana, M. SRB measures for partially hyperbolic systems whose central direction is mostly contracting. Isr. J. Math. 115, 157–193 (2000).

\bibitem [IN]{IN} Ilyashenko, Yu., Negut, A.: H\"older properties of perturbed skew products and Fubini regained. Nonlinearity, 25(8):2377 (2012). 

\bibitem [J]{J} Jakobson M.V.: On smooth mappings of the circle into itself. Mathematics of the USSR-Sbornik, 1971, Volume 14, Issue 2, Pages 161--185 (1971).

\bibitem [M]{M} Milnor, J.: On the concept of attractor. Comm. Math. Phys. 99, 177--195 (1985).

\bibitem [T]{T} Tsujii, M.: Fat solenoidal attractors. Nonlinearity, 14(5):1011--1027 (2001).

\bibitem [V]{V} Volk, D.: Persistent massive attractors of smooth maps. Ergod. Theory and Dynam. Systems, Vol.~34 , Issue~2, pp. 693--704  (2012).


\end{thebibliography}
\end{document}